\pgfplotsset{every tick label/.append style={font=\footnotesize}}
\pgfplotsset{compat=1.18}
\newcolumntype{K}[1]{>{\centering\arraybackslash$}p{#1}<{$}}
\newcolumntype{R}{>{\raggedleft\arraybackslash}X}
\newcolumntype{L}{>{\raggedright\arraybackslash}X}
\newcolumntype{C}{>{\centering\arraybackslash}X}
\newcolumntype{A}{>{\columncolor{gray!25}}C}
\newcolumntype{a}{>{\columncolor{gray!25}}c}
\newlength{\tablen}
\newcolumntype{.}{D{.}{.}{-1}}
\renewcommand\p@subfigure{\arabic{figure}.}
\renewcommand\p@subtable{\arabic{table}.}
\setlist[itemize]{leftmargin=2.5\parindent}
\setlist[enumerate]{leftmargin=2.5\parindent}
\def\addlegendimage{\csname pgfplots@addlegendimage\endcsname}
\theoremstyle{plain}
\newtheorem{lemma}{Lemma}
\newtheorem{theorem}{Theorem}
\theoremstyle{definition}
\newtheorem{definition}{Definition}
\newtheorem{example}{Example}
\theoremstyle{remark}
\let\@fnsymbol\@alph
\def\keywords{\vspace{.5em} 
{\noindent \textit{Keywords}: }}
\def\AMS{\vspace{.5em} 
{\noindent \textbf{\emph{MSC} class}: }}
\def\JEL{\vspace{.5em} 
{\noindent \textbf{\emph{JEL} classification number}: }}
\title{How to choose a completion method for pairwise comparison matrices with missing entries: \\ An axiomatic result}
\author{\href{https://sites.google.com/view/laszlocsato}{L\'aszl\'o Csat\'o}\thanks{~E-mail: \emph{laszlo.csato@sztaki.hu}} }
\affil{Institute for Computer Science and Control (SZTAKI) \\
E\"otv\"os Lor\'and Research Network (ELKH) \\
Laboratory on Engineering and Management Intelligence \\
Research Group of Operations Research and Decision Systems}
\affil{Corvinus University of Budapest (BCE) \\
Institute of Operations and Decision Sciences \\
Department of Operations Research and Actuarial Sciences}
\affil{Budapest, Hungary}
\date{\today}
\def\Dedication{
\begin{small}
{\noindent
``\emph{An important property of any weighting method is the ability to preserve the ordinal preferences which are implicitly expressed by the ratio-scale preference matrix entries.}''\footnote{~Source: \citet[p.~214]{GolanyKress1993}.}
}
\end{small}

\vspace{0.5cm} 
\justify }
\begin{document}

\maketitle
\thispagestyle{empty}
\Dedication

\begin{abstract}
\noindent
Since there exist several completion methods to estimate the missing entries of pairwise comparison matrices, practitioners face a difficult task in choosing the best technique. Our paper contributes to this issue: we consider a special set of incomplete pairwise comparison matrices that can be represented by a weakly connected directed acyclic graph, and study whether the derived weights are consistent with the partial order implied by the underlying graph.
According to previous results from the literature, two popular procedures, the incomplete eigenvector and the incomplete logarithmic least squares methods fail to satisfy the required property. Here, the recently introduced lexicographically optimal completion combined with any of these weighting methods is shown to avoid ordinal violation in the above setting. Our finding provides a powerful argument for using the lexicographically optimal completion to determine the missing elements in an incomplete pairwise comparison matrix.

\keywords{Decision analysis; directed acyclic graph; incomplete pairwise comparisons; lexicographic optimisation; ordinal violation}

\AMS{90B50, 91B08}

\JEL{C44, D71}
\end{abstract}

\clearpage

\section{Introduction} \label{Sec1}

Decision theory extensively uses pairwise comparisons. For example, the popular AHP (Analytic Hierarchy Process) methodology \citep{Saaty1977, Saaty1980} establishes priorities among the alternatives and criteria based on their pairwise comparisons.
Therefore, a fundamental element of the decision-making process is deriving priorities from the pairwise comparison matrix. Several procedures exist to that end \citep{ChooWedley2004}, the most common choices being the eigenvector \citep{Saaty1977} and the logarithmic least squares/row geometric mean \citep{CrawfordWilliams1985} methods.

However, some pairwise comparisons may be missing due to the lack of data or the inability of an expert to compare two alternatives. This is often the case in sporting contexts if some players do not play against each other \citep{BozokiCsatoTemesi2016, ChaoKouLiPeng2018, Csato2013a, TemesiSzadoczkiBozoki2023}. Fortunately, both the eigenvector and the logarithmic least squares methods have been extended to incomplete pairwise comparison matrices \citep{BozokiFulopRonyai2010}, and there are several other completion techniques proposed in the literature \citep{TekileBrunelliFedrizzi2023}.

Nonetheless, practitioners may face a dilemma when they should work with incomplete pairwise comparison matrices: Which completion method should be chosen? In order to contribute to this issue, we take an axiomatic approach. 
In particular, the current paper considers incomplete pairwise comparison matrices generated by (connected) directed acyclic graphs. and set up a natural research question: Is there a pair of a completion and a weighting method to obtain priorities that are certainly free from any ordinal violation?

The answer is far from trivial because both the eigenvector and logarithmic least squares solutions can yield a ranking that contradicts the ordinally consistent preferences if the number of items is at least seven \citep{CsatoRonyai2016, FaramondiOlivaBozoki2020}.
Here, the recently proposed \emph{lexicographically optimal completion} \citep{AgostonCsato2023} is proved to result in no ordinal violation if
(1) the preferences are represented by a directed acyclic graph, and
(2) the weight vector is determined by the eigenvector method or the logarithmic least squares method.
Our main contribution resides in finding a procedure for pairwise comparison matrices with missing entries to guarantee a reasonable ranking without any additional restrictions. 

The remainder of the paper is organised as follows. Section~\ref{Sec2} introduces incomplete pairwise comparison matrices and their optimal completions. The lack of ordinal violations is verified in Section~\ref{Sec3} for the above procedure. Finally, Section~\ref{Sec4} offers concluding remarks.

\section{Preliminaries} \label{Sec2}

In the following, we concisely present all necessary definitions and notations.

\subsection{Pairwise comparison matrices and weighting methods} \label{Sec21}

Denote by $\mathbb{R}^n_+$ the set of positive vectors of size $n$, and by $\mathbb{R}^{n \times n}_+$ the set of $n \times n$ positive matrices.

\begin{definition} \label{Def1}
\emph{Pairwise comparison matrix}:
Matrix $\mathbf{A} = \left[ a_{ij} \right] \in \mathbb{R}^{n \times n}_+$ is a \emph{pairwise comparison matrix} if $a_{ji} = 1 / a_{ij}$ for all $1 \leq i,j \leq n$.
\end{definition}

Throughout the paper, the set of pairwise comparison matrices is denoted by $\mathcal{A}$, and the set of $n \times n$ pairwise comparison matrices is denoted by $\mathcal{A}^{n \times n}$.

\begin{definition} \label{Def2}
\emph{Consistency}:
A pairwise comparison matrix $\mathbf{A} = \left[ a_{ij} \right] \in \mathcal{A}^{n \times n}$ is called \emph{consistent} if $a_{ik} = a_{ij} a_{jk}$ holds for all $1 \leq i,j,k \leq n$.
\end{definition}

In practice, pairwise comparison matrices are usually \emph{inconsistent}.
The level of inconsistency can be quantified by inconsistency indices, see \citet{Brunelli2018} for a survey.

\begin{definition} \label{Def3}
\emph{Weighting method}:
A \emph{weighting method} associates a weight vector $\mathbf{w} = \left[ w_i \right] \in \mathbb{R}^n_+$ satisfying $\sum_{i=1}^n w_i = 1$ to any pairwise comparison matrix $\mathbf{A} = \left[ a_{ij} \right] \in \mathcal{A}^{n \times n}$.
\end{definition}

The most popular weighting methods are as follows.

\begin{definition} \label{Def4}
\emph{Eigenvector method} \citep{Saaty1977, Saaty1980}:
Let $\mathbf{A} = \left[ a_{ij} \right] \in \mathcal{A}^{n \times n}$ be a pairwise comparison matrix. The weight vector $\mathbf{w} = \left[ w_i \right] \in \mathbb{R}^n_+$ provided by the \emph{eigenvector method} is the solution $\mathbf{w}$ of a system of linear equations:
\begin{equation} \label{eq_EM}
\lambda_{\max} \mathbf{w} = \mathbf{A} \mathbf{w},
\end{equation}
that is, $\lambda_{\max} \mathbf{A}$ is the dominant eigenvalue of matrix $\mathbf{A}$ and $\mathbf{w}$ is the associated right eigenvector.
\end{definition}

\begin{definition} \label{Def5}
\emph{Logarithmic least squares method} \citep{CrawfordWilliams1985, DeGraan1980, deJong1984, Rabinowitz1976, WilliamsCrawford1980}:
Let $\mathbf{A} = \left[ a_{ij} \right] \in \mathcal{A}^{n \times n}$ be a pairwise comparison matrix. The weight vector $\mathbf{w} = \left[ w_i \right] \in \mathbb{R}^n_+$ provided by the \emph{logarithmic least squares method} is the solution $\mathbf{w}$ of the following optimisation problem:
\begin{align} \label{eq_LLSM}
\min & \sum_{i=1}^n \sum_{j=1}^n \left[ \log a_{ij} - \log \left( \frac{w_i}{w_j} \right) \right]^2 \nonumber \\
\text{subject to } & w_i > 0 \text{ for all } i=1,2, \dots n.
\end{align}
\end{definition}

It can be shown that the unique solution $\mathbf{w}$ of \eqref{eq_LLSM} (up to multiplication by a positive constant) is given by the geometric means of row elements, namely,
\begin{equation} \label{eq_RGM}
w_i = \sqrt[n]{\prod_{i=1}^n a_{ij}} \qquad \text{for all} \qquad 1 \leq i \leq n.
\end{equation}

\citet{ChooWedley2004} discuss several other weighting methods. 

\subsection{Directed graphs and incomplete pairwise comparisons} \label{Sec22}

Ordinal pairwise comparisons can be represented by a directed graph.

\begin{definition} \label{Def6}
\emph{Directed graph}:
The tuple $(N,E)$ is a \emph{directed graph}, where $N$ is the set of vertices and $E$ is the set of ordered pairs of vertices (arcs).
\end{definition}

\begin{definition} \label{Def7}
\emph{Directed walk}:
Let $(N,E)$ be a directed graph.
A \emph{directed walk} is a sequence of arcs in $E$ such that the ending vertex of each arc in the sequence is the same as the starting vertex of the next arc in the sequence.
\end{definition}

\begin{definition} \label{Def8}
\emph{Cycle}:
Let $(N,E)$ be a directed graph.
A directed walk is called a \emph{cycle} if the starting vertex of the first arc coincides with the ending vertex of the last arc.
\end{definition}

\begin{definition} \label{Def9}
\emph{Connected directed acyclic graph} (CDAG):
A directed graph $(N,E)$ is \emph{acyclic} if it does not contain any cycle. \\
A directed graph $(N,E)$ is \emph{(weakly) connected} if the underlying undirected graph, where all arcs are replaced by undirected edges, is connected.
\end{definition}

An incomplete pairwise comparison matrix may contain missing entries outside its diagonal, which are denoted by $\ast$. 

\begin{definition} \label{Def10}
\emph{Incomplete pairwise comparison matrix}:
Pairwise comparison matrix $\mathbf{A} = \left[ a_{ij} \right] \in \mathcal{A}^{n \times n}$ is an \emph{incomplete pairwise comparison matrix} if  $a_{ij} > 0$ implies $a_{ji} = 1 / a_{ij}$ and $a_{ij} = \ast$ implies $a_{ji} = \ast$ for all $1 \leq i,j \leq n$.
\end{definition}

The set of incomplete pairwise comparison matrices is denoted by $\mathcal{A}^\ast$.

\begin{definition} \label{Def11}
\emph{CDAG-based incomplete pairwise comparison matrix}:
Let $(N,E)$ be a connected directed acyclic graph with $|N| = n$.
Matrix $\mathbf{A} = \left[ a_{ij} \right] \in \mathcal{A}^\ast$ is a \emph{CDAG-based incomplete pairwise comparison matrix} if $a_{ij} \in \{ 1 / \alpha;\, 1;\, \alpha;\, \ast \}$ such that for all $1 \leq i,j \leq n$:
\begin{itemize}
\item
$a_{ii} = 1$;
\item
$(i,j) \in E$ implies $a_{ij} = \alpha > 1$ and $a_{ji} = 1 / \alpha < 1$;
\item
$(i,j) \notin E$ and $(j,i) \notin E$ imply $a_{ij} = \ast$ and $a_{ji} = \ast$.
\end{itemize}
\end{definition}

To summarise, the ordinal preferences contained in the connected directed acyclic graph are represented by numerical values that satisfy the reciprocity property of pairwise comparison matrices.
Obviously, there exists a one-to-one correspondence between directed acyclic graphs and incomplete pairwise comparison matrices for which the conditions of Definition~\ref{Def11} hold.

\subsection{Optimal completions of missing pairwise comparisons} \label{Sec23}

The missing entries of incomplete pairwise comparison matrices are usually determined by replacing them with numerical values that minimise an inconsistency index \citep{KoczkodajHermanOrlowski1999}.
Perhaps the most popular inconsistency measure $CR$ has been suggested by \citet{Saaty1977, Saaty1980}. Since it is a linear transformation of the dominant eigenvalue $\lambda_{\max}$, \citet{ShiraishiObataDaigo1998} and \citet{ShiraishiObata2002} have proposed to choose the missing values in order to minimise $\lambda_{\max}$. \citet{BozokiFulopRonyai2010} have solved the corresponding optimisation problem that leads to the \emph{$CR$-optimal completion} of the incomplete pairwise comparison matrix.

Another widely used inconsistency measure is the geometric consistency index $GCI$ \citep{CrawfordWilliams1985, AguaronMoreno-Jimenez2003}. Minimising $GCI$ for incomplete pairwise comparison matrices yields the incomplete logarithmic least squares method \citep{BozokiFulopRonyai2010, BozokiTsyganok2019}. Although this technique provides only the optimal weight vector directly, the \emph{$GCI$-optimal completion} can be easily obtained by replacing each missing entry $a_{ij}$ with the corresponding ratio $w_i / w_j$ of the optimal weights.

According to \citet{BozokiFulopRonyai2010}, both the $CR$- and $GCI$-optimal completions are unique if and only if the graph associated with the incomplete pairwise comparison is connected, i.e.\ any two items can be compared at least indirectly through other items.

In every inconsistent pairwise comparison matrix, at least one inconsistent triad exists. There is only one reasonable measure of triad inconsistency \citep{Csato2019b, Cavallo2020}:
\begin{equation} \label{eq_TI}
TI = \max \left\{ \frac{a_{ik}}{a_{ij} a_{jk}}; \frac{a_{ij} a_{jk}}{a_{ik}} \right\}.
\end{equation}
In particular, note that \citet{BozokiRapcsak2008} have proved a (monotonic) functional relationship between the inconsistency index $TI$, the Koczkodaj inconsistency index $KI$ \citep{Koczkodaj1993, DuszakKoczkodaj1994}, and Saaty's inconsistency ratio $CR$ on the set of triads. Furthermore, according to \cite{Cavallo2020}, almost all inconsistency indices---including the geometric inconsistency index $GCI$---are functionally dependent for $n = 3$. \citet{Csato2019b} has given an axiomatic characterisation of the inconsistency ranking generated by these inconsistency indices.

Focusing on $TI$ has inspired the idea of a lexicographically optimal completion, which has been introduced recently by \citet{AgostonCsato2023}.

\begin{definition} \label{Def12}
\emph{Lexicographically optimal completion}:
Let $\mathbf{A} \in \mathcal{A}^{\ast}$ be an incomplete pairwise comparison matrix.
Let $\mathbf{A}(\mathbf{x})$ be the pairwise comparison matrix where the missing entries in matrix $\mathbf{A}$ are replaced by variables collected in $\mathbf{x}$. Let $t_{ijk}(\mathbf{x})$ be the inconsistency of the triad determined by the items $1 \leq i,j,k \leq n$ according to the inconsistency index $TI$ in matrix $\mathbf{A}(\mathbf{x})$. 
Let $\theta(\mathbf{x})$ be the vector of the $n(n-1)(n-2)/6$ local inconsistencies $t_{ijk}(\mathbf{x})$ arranged in a non-increasing order: $\theta_u(\mathbf{x}) \geq \theta_v(\mathbf{x})$ for all $u < v$.

Matrix $\mathbf{A}(\mathbf{x})$ is a \emph{lexicographically optimal completion} of the incomplete pairwise comparison matrix $\mathbf{A}$ if there is no lexicographically smaller completion: for any other completion $\mathbf{A}(\mathbf{y})$, there does not exist an index $1 \leq v \leq n(n-1)(n-2)/6$ such that $\theta_u(\mathbf{x}) = \theta_u(\mathbf{y})$ for all $u < v$ and $\theta_v(\mathbf{x}) > \theta_v(\mathbf{y})$.
\end{definition}

Again, the lexicographically optimal completion is unique if the graph associated with the incomplete pairwise comparison is connected \citep[Theorem~1]{AgostonCsato2023}.

\subsection{The research question} \label{Sec24}

The ordinal preferences of the decision-maker are often more reliable than the cardinal values \citep{YuanWuTu2023}. Thus, it is crucial to investigate whether the derived priorities contain any ordinal violation.

\begin{definition} \label{Def13}
\emph{Ordinal violation}:
Let $\mathbf{A} = \left[ a_{ij} \right] \in \mathcal{A}^{\ast}$ be an incomplete pairwise comparison matrix and $\mathbf{w} = \left[ w_i \right] \in \mathbb{R}_+$ be a weight vector.
The weight vector $\mathbf{w}$ shows \emph{ordinal violation} if there exist items $i,j$ such that $a_{ij} > 1$ but $w_i \leq w_j$, or $a_{ij} = 1$ but $w_i \neq w_j$.
\end{definition}

\citet{GolanyKress1993} have suggested the number of violations as an important criterion to compare weighting methods. This issue has been widely discussed in the literature \citep{ChenKouLi2018, CsatoRonyai2016, FaramondiOlivaBozoki2020, Rezaei2015, TuWu2021, TuWu2023, TuWuPedrycz2023, YuanWuTu2023, WangPengKou2021}.

\begin{figure}[t!]
\centering

\begin{tikzpicture}[scale=1,auto=center, transform shape, >=triangle 45]
\tikzstyle{every node}=[draw,align=center];
  \node (N1) at (-2,9) {Connected directed \\ acyclic graph};
  \node (N2) at (-2,6) {Incomplete pairwise \\ comparison matrix $\mathbf{A}$};
  \node (N3) at (-2,3) {(Complete) pairwise \\ comparison matrix $\mathbf{A}(\mathbf{x})$};
  \node (N4) at (-2,0) {Weight vector $\mathbf{w}$};
  \node[shape = ellipse] (N5) at (5,3) {Does the weight vector $\mathbf{w}$ \\ contain ordinal violation with \\ respect to the incomplete \\ pairwise comparison matrix $\mathbf{A}$?};

\tikzstyle{every node}=[align=center];  
  \draw [->,line width=1pt] (N1) -- (N2)  node [midway, left] {Definition~\ref{Def11}};
  \draw [->,line width=1pt] (N2) -- (N3)  node [midway, left] {Completion method};
  \draw [->,line width=1pt] (N3) -- (N4)  node [midway, left] {Weighting method};
  \draw [->,line width=1pt,dashed] (N2) -- (N5)  node [midway, above left] {};
  \draw [->,line width=1pt,dashed] (N4) -- (N5)  node [midway, above left] {};
\end{tikzpicture}
\captionsetup{justification=centering}
\caption{The setting where ordinal violations are investigated}
\label{Fig1}
\end{figure}
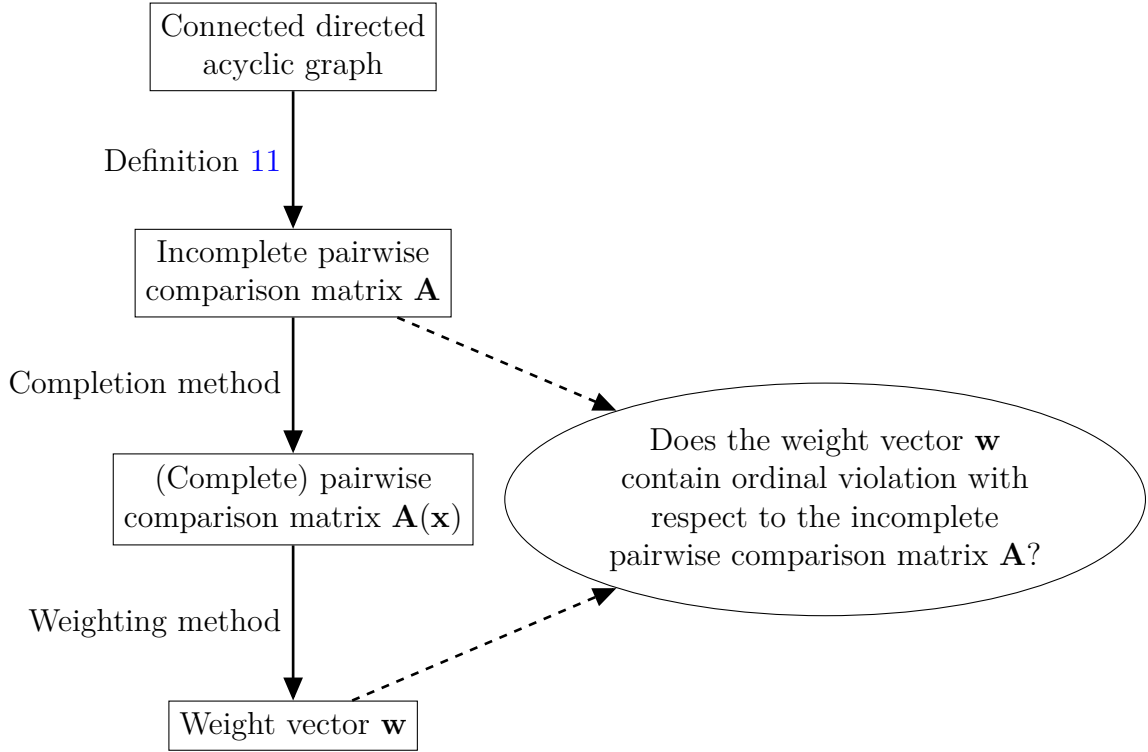

Figure~\ref{Fig1} outlines the problem for which the current paper makes a  substantial contribution. We consider a (connected) directed acyclic graph, construct the associated incomplete pairwise comparison matrix according to Definition~\ref{Def11}, estimate the values of missing entries, and derive a weight vector from the complete pairwise comparison matrix. A natural question is how the lack of ordinal violation can be guaranteed in this process.

\section{The main result} \label{Sec3}

First, two results from the extant literature are recalled to highlight the significance of the problem presented in Figure~\ref{Fig1}.

\begin{lemma} \label{Lemma1}
Let $\mathbf{A} = \left[ a_{ij} \right] \in \mathcal{A}^\ast$ be a CDAG-based incomplete pairwise comparison matrix.
The priorities may contain ordinal violation if the $CR$-optimal completion is used to obtain the missing entries and the eigenvector method is used to derive the weight vector.
\end{lemma}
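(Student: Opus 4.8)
Since the claim asserts only that ordinal violation \emph{may} occur, it suffices to exhibit a single CDAG-based incomplete pairwise comparison matrix for which the $CR$-optimal completion followed by the eigenvector method returns a weight vector that contradicts the partial order of Definition~\ref{Def11}. It is worth noting that this pair of procedures is exactly the \emph{incomplete eigenvector method} of \citet{BozokiFulopRonyai2010}, so the statement amounts to saying that this method need not be free of ordinal violation. That it fails for (ordinally consistent) complete pairwise comparison matrices is already known from \citet{CsatoRonyai2016, FaramondiOlivaBozoki2020}; the task here is to realise such an example within the narrow class described by Definition~\ref{Def11}, where every finite off-diagonal entry is forced to equal either $\alpha$ or $1/\alpha$.

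The plan is therefore to fix a connected directed acyclic graph $(N,E)$ and a value $\alpha > 1$, and to write down the associated incomplete matrix $\mathbf{A}$. A natural starting point is to take the graph underlying one of the known counterexamples for the eigenvector method on ordinally consistent matrices---which need $|N| \geq 7$---collapse its preference intensities onto the two admissible levels $\{ 1/\alpha,\, \alpha \}$, and choose $\alpha$ large so that the inconsistency introduced by the completion is pronounced. One must of course check that the resulting graph is genuinely acyclic and weakly connected, i.e.\ that $\mathbf{A}$ satisfies Definition~\ref{Def11}.

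Next I would compute the $CR$-optimal completion of $\mathbf{A}$ together with its dominant eigenvector. The useful fact here is the characterisation from \citet{BozokiFulopRonyai2010}: the (unique, by connectedness) $\lambda_{\max}$-minimising completion is the one in which every missing entry $a_{ij}$ equals the ratio $w_i / w_j$ of the Perron eigenvector $\mathbf{w}$ of the completed matrix, which turns the optimisation into a self-consistent system of equations. Solving this system---in practice numerically, with guaranteed-accuracy bounds on $\mathbf{w}$---delivers the weight vector, and the argument is finished once one points to an arc $(i,j) \in E$ with $w_i \leq w_j$, which is precisely an ordinal violation in the sense of Definition~\ref{Def13}.

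The main obstacle is making this last step compatible with the first. In a CDAG-based matrix every topological order of $(N,E)$ is compatible with the recorded comparisons---the observed entries never contradict a single underlying ratio scale read along such an order---so for small $n$ the $CR$-optimal completion stays close enough to consistency that no direct arc is reversed. An ordinal violation can appear only once the global aggregation performed by the eigenvector is fed a sufficiently large and lopsided pattern of wins and losses; identifying a specific graph for which this genuinely occurs, and then certifying the strict inequality $w_i < w_j$ in the face of $(i,j) \in E$ from a numerical eigenvector computation, is the delicate part of the argument.
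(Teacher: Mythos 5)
Your proposal correctly identifies what a proof must deliver: since the lemma is existential (``may contain ordinal violation''), it suffices to exhibit one CDAG-based incomplete matrix for which the $CR$-optimal completion followed by the eigenvector method reverses an arc of the graph. But your text never actually exhibits such an example. You describe a search strategy (take a known counterexample graph on at least seven vertices, collapse intensities to $\{1/\alpha, \alpha\}$, pick $\alpha$ large, solve the self-consistent eigenvector system numerically) and then explicitly concede that ``identifying a specific graph for which this genuinely occurs, and then certifying the strict inequality $w_i < w_j$ \ldots is the delicate part of the argument.'' That delicate part \emph{is} the proof; without a concrete graph, a concrete $\alpha$, and a verified computation (or a rigorous symbolic/asymptotic argument) showing $w_i \leq w_j$ for some arc $(i,j) \in E$, the statement remains unproved. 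As written, this is a plan for a proof, not a proof.

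The gap is all the more avoidable because the paper's own proof is a one-line citation: Theorem~4.2 of \citet{CsatoRonyai2016} already contains exactly the required counterexample, namely a DAG-based incomplete matrix on seven items (closely related to the graph of Example~\ref{Examp1}) whose $CR$-optimal completion and Perron eigenvector provably violate an arc for suitable $\alpha$. You cite this very reference, but you mischaracterise it as concerning only \emph{complete} ordinally consistent matrices and treat the incomplete, Definition~\ref{Def11} setting as open; in fact that reference works precisely in the incomplete DAG-generated setting, so citing its Theorem~4.2 (or reproducing its example with the accompanying verification) would close your argument immediately. Either import that result explicitly or carry out the construction and certification you sketch; at present the decisive step is missing.
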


\begin{proof}
See \citet[Theorem~4.2]{CsatoRonyai2016}.
\end{proof}

\begin{lemma} \label{Lemma2}
Let $\mathbf{A} = \left[ a_{ij} \right] \in \mathcal{A}^\ast$ be a CDAG-based incomplete pairwise comparison matrix.
The priorities may contain ordinal violation if the $GCI$-optimal completion is used to obtain the missing entries and the logarithmic least squares method is used to derive the weight vector.
\end{lemma}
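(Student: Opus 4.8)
The proof follows the same logic as that of Lemma~\ref{Lemma1}: it is enough to display one CDAG-based incomplete pairwise comparison matrix for which the logarithmic least squares weights computed on the $GCI$-optimal completion reverse the order of two items joined by an arc. The plan is to first use the fact recalled in Section~\ref{Sec23} that applying the logarithmic least squares method to the $GCI$-optimal completion coincides with the incomplete logarithmic least squares method. Hence the logarithms $y_i = \log w_i$ of the optimal weights solve the linear system $\mathbf{L} \mathbf{y} = \mathbf{b}$, where $\mathbf{L}$ is the Laplacian of the undirected graph underlying the CDAG and $b_i = \sum_j \log a_{ij}$ with the summation running over the neighbours $j$ of vertex $i$. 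Since every known off-diagonal entry equals $\alpha$ or $1/\alpha$, we have $b_i = (\log \alpha)\, d_i$, where $d_i$ is the out-degree minus the in-degree of $i$ in the CDAG; therefore $\mathbf{y} = (\log \alpha)\, \mathbf{L}^{+} \mathbf{d} + c \mathbf{1}$ for a scalar $c$, with $\mathbf{L}^{+}$ the Moore--Penrose pseudoinverse, and the normalised solution is unique because the CDAG is connected \citep{BozokiFulopRonyai2010}.

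With this reduction, for an arc $(i,j) \in E$ and $\alpha > 1$ the comparison between $w_i$ and $w_j$ is governed by the sign of $\left( \mathbf{L}^{+} \mathbf{d} \right)_i - \left( \mathbf{L}^{+} \mathbf{d} \right)_j$, and an ordinal violation occurs exactly when this number is non-positive for some arc. The second step is to exhibit a connected directed acyclic graph where this happens. The intuition is that a vertex $i$ dominating $j$ directly but compared with only a few further items, while $j$ dominates many items, can end up with $w_i \le w_j$ once the ``indirect'' information is aggregated through $\mathbf{L}^{+}$. Such a configuration appears for seven items; an explicit example, together with the verification that the logarithmic least squares ranking contradicts the partial order induced by the graph, is provided in \citet{CsatoRonyai2016} (see also \citet{FaramondiOlivaBozoki2020}). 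Since a CDAG-based incomplete pairwise comparison matrix in the sense of Definition~\ref{Def11} is exactly an ordinally consistent preference structure encoded by a directed acyclic graph, these examples lie within the present framework, which finishes the argument.

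The main obstacle is the second step: finding a directed acyclic graph for which the sign of $\left( \mathbf{L}^{+} \mathbf{d} \right)_i - \left( \mathbf{L}^{+} \mathbf{d} \right)_j$ reverses along some arc, and in particular confirming that at least seven items are needed for this. Although, for a fixed graph, this is only a finite linear-algebra check, the construction itself is delicate; this is precisely the content of the cited results, which we invoke rather than reprove here.
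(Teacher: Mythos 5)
Your proposal is correct and ultimately rests on the same step as the paper, whose entire proof is a citation of Theorem~3.3 (and the seven-item counterexample) in \citet{CsatoRonyai2016}; your added Laplacian reduction $\mathbf{y} = (\log \alpha)\, \mathbf{L}^{+} \mathbf{d} + c \mathbf{1}$ is a sound piece of exposition but the decisive ingredient, the existence of the violating CDAG, is invoked from exactly the same source. So this is essentially the paper's argument, presented with more detail on why the incomplete logarithmic least squares ranking depends only on the graph and not on $\alpha$.
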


\begin{proof}
See \citet[Theorem~3.3]{CsatoRonyai2016}.
\end{proof}

According to our knowledge, there exists no pair of completion and weighting techniques that guarantees the avoidance of ordinal violations for CDAG-based incomplete pairwise comparison matrices yet. In the following, we provide such a procedure.

\begin{lemma} \label{Lemma3}
Let $\mathbf{A} = \left[ a_{ij} \right] \in \mathcal{A}^{n \times n}$ be a (complete) pairwise comparison matrix and $1 \leq i,j \leq n$ be two items such that $a_{ij} > 1 > a_{ji}$ and $a_{ik} \geq a_{jk}$ for all $k \neq i,j$.
If the weight vector $\mathbf{w} = \left[ w_i \right]$ is derived by the eigenvector method or the logarithmic least squares method, then $w_i > w_j$.
\end{lemma}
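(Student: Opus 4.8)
The plan is to treat the two weighting methods separately, since each has a convenient closed-form or fixed-point characterisation that makes the comparison $w_i > w_j$ tractable.

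For the logarithmic least squares method I would use the row geometric mean formula \eqref{eq_RGM}. Then $w_i / w_j = \sqrt[n]{\prod_{k=1}^n a_{ik} / \prod_{k=1}^n a_{jk}} = \sqrt[n]{\prod_{k=1}^n (a_{ik}/a_{jk})}$. The hypothesis gives $a_{ik} \geq a_{jk}$ for every $k \neq i,j$, so each of those factors is at least $1$. For the two remaining factors, the $k=i$ term is $a_{ii}/a_{ji} = 1/a_{ji} = a_{ij}$ and the $k=j$ term is $a_{ij}/a_{jj} = a_{ij}$; since $a_{ij} > 1$, the product $a_{ij} \cdot a_{ij} \cdot \prod_{k \neq i,j}(a_{ik}/a_{jk}) > 1$, whence $w_i/w_j > 1$. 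This case is essentially immediate.

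For the eigenvector method I would argue from the eigen-equation \eqref{eq_EM}: $\lambda_{\max} w_i = \sum_{k=1}^n a_{ik} w_k$ and $\lambda_{\max} w_j = \sum_{k=1}^n a_{jk} w_k$. Subtracting, $\lambda_{\max}(w_i - w_j) = \sum_{k=1}^n (a_{ik} - a_{jk}) w_k$. I would split off the $k=i$ and $k=j$ terms: the $k=i$ contribution is $(a_{ii} - a_{ji}) w_i = (1 - 1/a_{ij}) w_i$, and the $k=j$ contribution is $(a_{ij} - a_{jj}) w_j = (a_{ij} - 1) w_j$. Because $a_{ij} > 1$, both coefficients $1 - 1/a_{ij}$ and $a_{ij} - 1$ are strictly positive, and by Perron--Frobenius all $w_k > 0$ and $\lambda_{\max} > 0$. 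The remaining sum $\sum_{k \neq i,j}(a_{ik} - a_{jk}) w_k$ is nonnegative term by term by hypothesis. Hence $\lambda_{\max}(w_i - w_j) \geq (1 - 1/a_{ij}) w_i + (a_{ij}-1) w_j > 0$, and dividing by $\lambda_{\max} > 0$ gives $w_i > w_j$.

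The main subtlety — and the only place one has to be a little careful — is in the eigenvector case: the inequality $\lambda_{\max}(w_i - w_j) \geq (1-1/a_{ij})w_i + (a_{ij}-1)w_j$ still has $w_i$ on both sides, so it does not instantly yield $w_i > w_j$ if one is sloppy. But since the right-hand side is a strictly positive quantity (both $w_i, w_j > 0$), the left-hand side $\lambda_{\max}(w_i - w_j)$ is strictly positive, and as $\lambda_{\max} > 0$ we conclude $w_i - w_j > 0$ directly; there is no circularity. One should also note explicitly that the hypothesis $a_{ij} > 1 > a_{ji}$ guarantees $a_{ij} \neq 1$, so the two split-off coefficients are genuinely positive rather than merely nonnegative — this is what makes the conclusion strict.
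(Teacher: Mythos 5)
Your proposal is correct and follows essentially the same route as the paper: for the eigenvector method it compares the two eigen-equations $\lambda_{\max} w_i = \sum_k a_{ik} w_k$ and $\lambda_{\max} w_j = \sum_k a_{jk} w_k$ termwise (your subtraction-and-split is just a more explicit write-up of the paper's one-line strict inequality, with the strictness coming from the $k=i$ and $k=j$ entries), and for the logarithmic least squares method it compares the row geometric means via \eqref{eq_RGM}. The extra care you take in spelling out why the diagonal and reciprocal entries make the inequality strict, and why positivity of $\mathbf{w}$ and $\lambda_{\max}$ removes any circularity, is a sound elaboration of the same argument rather than a different proof.
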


\begin{proof}
For the eigenvector method, \eqref{eq_EM} implies that
\[
\lambda_{\max} w_i = \sum_{k=1}^n a_{ik} w_k > \sum_{k=1}^n a_{jk} w_k = \lambda_{\max} w_j.
\]
For the logarithmic least squares method, \eqref{eq_RGM} directly gives the required implication as
\[
\prod_{k=1}^n a_{ik} > \prod_{k=1}^n a_{jk}.
\]
\end{proof}

\begin{theorem} \label{Theo1}
Let $\mathbf{A} = \left[ a_{ij} \right] \in \mathcal{A}^\ast$ be a CDAG-based incomplete pairwise comparison matrix.
The priorities do not contain ordinal violation if the lexicographically optimal completion is used to obtain the missing entries and the eigenvector method or the logarithmic least squares method is used to derive the weight vector.
\end{theorem}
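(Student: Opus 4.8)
The plan is to derive Theorem~\ref{Theo1} from Lemma~\ref{Lemma3} together with one structural property of the lexicographically optimal completion. Let $(N,E)$ be the underlying CDAG and write $\mathbf{A}^\ast$, with entries $a^\ast_{ij}$, for the (unique, by \citet[Theorem~1]{AgostonCsato2023}) lexicographically optimal completion of $\mathbf{A}$. In a CDAG-based matrix every off-diagonal entry equals $\alpha$, $1/\alpha$, or $\ast$, and a numerical off-diagonal entry arises exactly from an arc; hence ordinal violation with respect to $\mathbf{A}$ can be caused only by some arc $(i,j)\in E$, for which $a^\ast_{ij}=\alpha>1>1/\alpha=a^\ast_{ji}$. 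Therefore it suffices to prove the \emph{row-domination property}: for every arc $(i,j)\in E$ and every $k\neq i,j$ one has $a^\ast_{ik}\ge a^\ast_{jk}$; granting this, Lemma~\ref{Lemma3} immediately yields $w_i>w_j$ for both the eigenvector and the logarithmic least squares methods, i.e.\ no ordinal violation. (By chaining along arcs, row-domination for arcs is equivalent to the same statement for all directed paths; in particular it implies the monotonicity fact that $a^\ast_{pq}\ge\alpha$ whenever $q$ is reachable from $p$.)

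I would prove row-domination by contradiction, exploiting the lexicographic optimality of $\mathbf{A}^\ast$. Suppose some arc $(i,j)\in E$ and some $k\neq i,j$ satisfy $a^\ast_{ik}<a^\ast_{jk}$. First observe that at least one of $a_{ik},a_{jk}$ is a missing entry of $\mathbf{A}$: a given entry lies in $\{\alpha,1/\alpha\}$, and a short case check — using acyclicity to rule out the configuration $(k,i)\in E$, $(j,k)\in E$, which together with $(i,j)\in E$ would close a cycle — shows that if both $a_{ik}$ and $a_{jk}$ were given then $a_{ik}\ge a_{jk}$ already holds. Among all violating pairs choose one maximizing the inconsistency of the triad $\{i,j,k\}$, whose inconsistency is $\max\{P,1/P\}$ with $P=a^\ast_{ij}a^\ast_{jk}a^\ast_{ki}=\alpha\,a^\ast_{jk}/a^\ast_{ik}>\alpha>1$, hence equals $P$. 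The candidate improvement is to move the free entry among $a^\ast_{ik},a^\ast_{jk}$ (increase $a^\ast_{ik}$, or decrease $a^\ast_{jk}$, whichever is free) by a small factor toward closing the gap; this strictly decreases $P$ and therefore the inconsistency of $\{i,j,k\}$.

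The main obstacle is to show that this perturbation produces a \emph{lexicographically} smaller completion, i.e.\ that it does not raise an earlier coordinate of the sorted inconsistency vector $\theta$. Only triads containing the moved entry change, and the product $P$ around such a triad moves up for some of them and down for others, so the move need not decrease every affected inconsistency; one must argue that every triad whose inconsistency would increase sits strictly below $TI(\{i,j,k\})$ in $\theta$ — which is the point of choosing the violating pair to maximize $TI(\{i,j,k\})$ — and that a sufficiently small move keeps all such increases strictly below the new, reduced value of $TI(\{i,j,k\})$, so that the net effect on $\theta$ is a strict lexicographic decrease. Carrying this through is where the special structure is needed: the entries of $\mathbf{A}$ lie in $\{\alpha,1/\alpha,\ast\}$, the graph is acyclic, and $a^\ast_{ij}=\alpha$ is fixed, which pins down the signs and sizes of the affected triad products tightly enough to exclude a disadvantageous ``trade''; should the single-entry move prove too coarse, one would instead perform a simultaneous minimal adjustment of the whole block of offending entries of rows $i$ and $j$ (for instance equalizing them on the set $\{k:a^\ast_{ik}<a^\ast_{jk}\}$). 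Once row-domination is established for every arc, Theorem~\ref{Theo1} follows from Lemma~\ref{Lemma3}.
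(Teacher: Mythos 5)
Your reduction is exactly the paper's: off-diagonal given entries of a CDAG-based matrix are only $\alpha$ or $1/\alpha$, so it suffices to prove the row-domination property $a^\ast_{ik}\ge a^\ast_{jk}$ for every arc $(i,j)$ and every $k$, after which Lemma~\ref{Lemma3} gives $w_i>w_j$ for both weighting methods. But the proof of row-domination --- the actual content of the theorem --- is not carried out. Your plan is a local perturbation argument at the lexicographic optimum: pick a violating pair maximizing the triad inconsistency, nudge the free entry, and claim the sorted inconsistency vector $\theta$ decreases lexicographically. The step you yourself flag as the ``main obstacle'' is precisely the gap: moving $a^\ast_{ik}$ (or $a^\ast_{jk}$) changes \emph{every} triad containing that entry, and your maximality choice ranges only over \emph{violating} pairs, so nothing prevents some other affected triad from having inconsistency equal to or larger than $TI_{ijk}$; if such a triad increases, $\theta$ worsens at an earlier or equal coordinate and no contradiction is obtained. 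In general one cannot lower a single coordinate of the sorted vector at a lexicographic optimum without raising an earlier one --- ruling this out is the whole difficulty, and neither the ``sufficiently small move'' remark nor the fallback of equalizing a block of entries supplies an argument. As written, the proposal is an unproved plan at its central step.

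The paper avoids perturbation analysis entirely with one global construction you are missing: define a comparison completion $\mathbf{C}$ directly from the DAG by setting $c_{ij}=\alpha$ whenever there is a directed walk from $i$ to $j$ (in particular for every arc) and $c_{ij}=1$ for incomparable pairs, with reciprocal entries below the diagonal. Transitivity of reachability shows every triad of $\mathbf{C}$ has $TI\le\alpha$. On the other hand, if the lexicographically optimal completion $\mathbf{B}$ violated row-domination for some arc $(i,j)$ and some $k$, then $b_{ij}b_{jk}b_{ki}=\alpha\, b_{jk}/b_{ik}>\alpha$, so $\theta_1(\mathbf{B})>\alpha\ge\theta_1(\mathbf{C})$, i.e.\ $\mathbf{C}$ is lexicographically smaller already at the first coordinate --- contradiction. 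This single explicit feasible completion replaces all the delicate bookkeeping your local move would require; if you want to salvage your route, you would effectively need to prove that the optimal completion's maximal triad inconsistency is at most $\alpha$, which is exactly what the construction of $\mathbf{C}$ delivers for free.
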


\begin{proof}
For any CDAG-based incomplete pairwise comparison matrix, it can be assumed without loss of generality for all $i < j$ that $a_{ij} = \alpha$ or $a_{ij}$ is missing (due to topological sort) .
Denote by $\mathbf{B} = \left[ b_{ij} \right] \in \mathcal{A}$ the (complete) pairwise comparison matrix obtained from $\mathbf{A}$ by the lexicographically optimal completion. First, it is verified that $a_{ij} = \alpha > 1$ implies $b_{ik} \geq b_{jk}$ for all $k \neq i,j$.

Assume, for contradiction, that $b_{ik} < b_{jk}$ is satisfied for a particular $k$. Consequently, $TI_{ijk} \left( \mathbf{B} \right) > \alpha$ holds for the triad determined by items $i$, $j$, $k$.

Define the (complete) pairwise comparison matrix $\mathbf{C} = \left[ c_{ij} \right] \in \mathcal{A}$ for all $i < j$ as follows:
\begin{itemize}
\item
$c_{ij} = a_{ij}$ if $a_{ij} = \alpha$;
\item
$c_{ij} = \alpha$ if there exists a directed walk (see Definition~\ref{Def7}) from $i$ to $j$ in the underlying connected directed acyclic graph;
\item
$c_{ij} = 1$ otherwise.
\end{itemize}
Entries below the diagonal are filled to satisfy the reciprocity property.

It can be checked that the inconsistency $TI$ of any triad in matrix $\mathbf{C}$ is at most $\alpha$: \eqref{eq_TI} may exceed $\alpha$ only if the $c_{ij} c_{jk} = \alpha^2$ ($c_{ij} c_{jk} = 1 / \alpha^2$), but then there is a directed walk from $i$ to $k$, hence $c_{ik} = \alpha$ ($c_{ik} = 1 / \alpha$).
Therefore, $\mathbf{B}$ is not the lexicographically optimal completion of the incomplete pairwise comparison matrix $\mathbf{A}$ since matrix $\mathbf{C}$ provides a lexicographically smaller completion.

Thus, $a_{ij} = \alpha > 1$ implies $b_{ik} \geq b_{jk}$ for all $k \neq i,j$, and the conditions of Lemma~\ref{Lemma3} are satisfied by the lexicographically optimal completion of the incomplete pairwise comparison matrix $\mathbf{A}$. Hence, $w_i > w_j$ for the weights derived by the eigenvector method or the logarithmic least squares method.
\end{proof}

The key in the proof of Theorem~\ref{Theo1} is the construction of matrix $\mathbf{C}$, which is illustrated below.

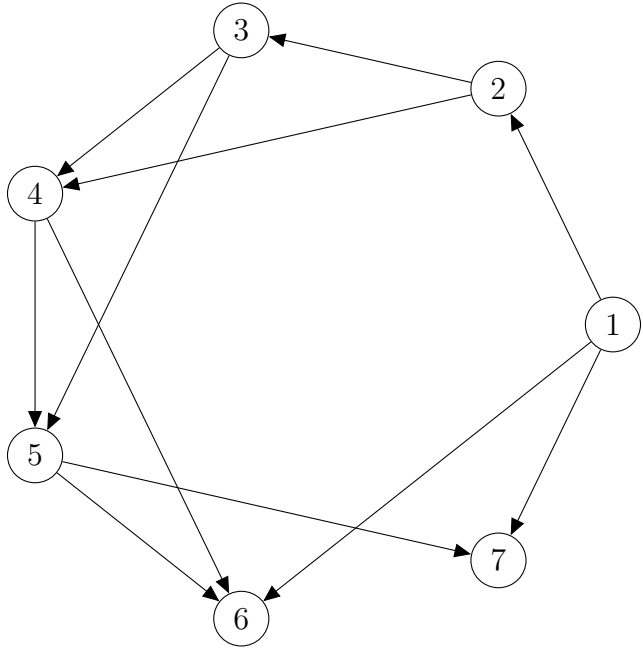
\begin{figure}[t!]
\centering

\begin{tikzpicture}[scale=1, auto=center, transform shape, >=triangle 45]
\tikzstyle{every node}=[draw,shape=circle];
  \node (n1)  at (0:4)  {$1$};
  \node (n2)  at (360/7:4)   {$2$};
  \node (n3)  at (2*360/7:4)   {$3$};
  \node (n4)  at (3*360/7:4)  {$4$};
  \node (n5)  at (4*360/7:4)  {$5$};
  \node (n6)  at (5*360/7:4)  {$6$};
  \node (n7)  at (6*360/7:4)  {$7$};

  \foreach \from/\to in {n1/n2,n1/n6,n1/n7,n2/n3,n2/n4,n3/n4,n3/n5,n4/n5,n4/n6,n5/n6,n5/n7}
    \draw [->] (\from) -- (\to);
\end{tikzpicture}
\caption{The connected directed acyclic graph of Example~\ref{Examp1}}
\label{Fig2}
\end{figure}

\begin{example} \label{Examp1}
Consider the directed acyclic graph shown in Figure~\ref{Fig2} \citep[Example~3.4]{CsatoRonyai2016}.
It is the minimal counterexample that can be used in the proof of Lemma~\ref{Lemma2} regarding the number of items (7), and among them, the number of arcs (11) .
According to Definition~\ref{Def11}, the associated incomplete pairwise comparison matrix $\mathbf{A}$ is as follows:
\[
\mathbf{A}=
\left(
\begin{array}{K{2.5em}K{2.5em}K{2.5em}K{2.5em}K{2.5em}K{2.5em}K{2.5em}}
    1     & \alpha     & \ast & \ast & \ast & \alpha     & \alpha \\
    1/\alpha   & 1     & \alpha     & \alpha     & \ast & \ast & \ast \\
    \ast & 1/\alpha   & 1     & \alpha     & \alpha     & \ast & \ast \\
    \ast & 1/\alpha   & 1/\alpha   & 1     & \alpha     & \alpha     & \ast \\
    \ast & \ast & 1/\alpha   & 1/\alpha   & 1     & \alpha     & \alpha \\
    1/\alpha   & \ast & \ast & 1/\alpha   & 1/\alpha   & 1     & \ast \\
    1/\alpha   & \ast & \ast & \ast & 1/\alpha   & \ast & 1 \\
\end{array}
\right),
\]
where $\alpha > 1$.

This leads to the following (complete) pairwise comparison matrix $\mathbf{C}$:
\[
\mathbf{C}=
\left(
\begin{array}{K{2.5em}K{2.5em}K{2.5em}K{2.5em}K{2.5em}K{2.5em}K{2.5em}}
    1     	 & \alpha   & \alpha   & \alpha   & \alpha   & \alpha & \alpha \\
    1/\alpha & 1     	& \alpha   & \alpha   & \alpha   & \alpha & \alpha \\
    1/\alpha & 1/\alpha & 1        & \alpha   & \alpha   & \alpha & \alpha \\
    1/\alpha & 1/\alpha & 1/\alpha & 1     	  & \alpha   & \alpha & \alpha \\
    1/\alpha & 1/\alpha & 1/\alpha & 1/\alpha & 1     	 & \alpha & \alpha \\
    1/\alpha & 1/\alpha & 1/\alpha & 1/\alpha & 1/\alpha & 1      & 1 \\
    1/\alpha & 1/\alpha & 1/\alpha & 1/\alpha & 1/\alpha & 1	  & 1 \\
\end{array}
\right).
\]
For instance, $c_{15} = \alpha$ due to the directed walk $1 \to 2 \to 3 \to 5$ but $c_{67} = 1$ because there is no directed walk from vertex 6 to vertex 7. It can be seen that $TI \leq \alpha$ for any triad $i,j,k$: while $TI = \alpha$ in the case of most triads, $TI_{167} = TI_{267} = TI_{367} = TI_{467} = TI_{567} = 1$.
\end{example}

The second part of the proof of Theorem~\ref{Theo1} is based on Lemma~\ref{Lemma3}, which is probably satisfied by most (if not any) reasonable weighting methods, similar to the eigenvector and the logarithmic least squares methods.

\section{Conclusions} \label{Sec4}

The current paper has studied an axiom for completion methods of pairwise comparison matrices with missing entries. 
As Figure~\ref{Fig1} shows, a three-step procedure has been investigated:
\begin{enumerate}
\item
A directed acyclic graph is transformed into an incomplete pairwise comparison matrix (Definition~\ref{Def11});
\item
All missing entries of the incomplete pairwise comparison matrix are estimated (Section~\ref{Sec23});
\item
Priorities are determined by using a weighting method (Section~\ref{Sec21}).
\end{enumerate}
In particular, we have focused on the conditions of avoiding ordinal violations in this setting. \citet{CsatoRonyai2016} have already revealed that the $CR$-optimal completion combined with the eigenvector method, as well as the $GCI$-optimal completion combined with the logarithmic least squares method does not satisfy the required property. On the other hand, the lexicographically optimal completion together with any of these popular weighting methods is proven to guarantee the lack of any ordinal violation. This seems to be a strong argument in favour of filling the missing entries by lexicographically minimising the inconsistencies of the triads, which is a recent but promising proposal to handle incomplete pairwise comparisons \citep{AgostonCsato2023}.

There are several interesting directions for future research.
It would be interesting to see other pairs of completion and weighting techniques that avoid ordinal violations. Some restrictions can be imposed on directed acyclic graphs to ensure ordinal consistency for the incomplete eigenvector and the incomplete logarithmic least squares methods. A natural extension of our setting can allow for different intensities of the preferences given by a directed acyclic graph.

\bibliographystyle{apalike}
\bibliography{All_references}

\begin{thebibliography}{}

\bibitem[{\'A}goston and Csat{\'o}, 2023]{AgostonCsato2023}
{\'A}goston, K.~{\relax Cs}. and Csat{\'o}, L. (2023).
\newblock A lexicographically optimal completion for pairwise comparison
  matrices with missing entries.
\newblock {\em European Journal of Operational Research}, in press.
\newblock {DOI}:
  \href{https://doi.org/10.1016/j.ejor.2023.10.035}{10.1016/j.ejor.2023.10.035}.

\bibitem[Aguar\'on and Moreno-Jim{\'e}nez, 2003]{AguaronMoreno-Jimenez2003}
Aguar\'on, J. and Moreno-Jim{\'e}nez, J.~M. (2003).
\newblock The geometric consistency index: Approximated thresholds.
\newblock {\em European Journal of Operational Research}, 147(1):137--145.

\bibitem[Boz{\'o}ki et~al., 2016]{BozokiCsatoTemesi2016}
Boz{\'o}ki, S., Csat{\'o}, L., and Temesi, J. (2016).
\newblock An application of incomplete pairwise comparison matrices for ranking
  top tennis players.
\newblock {\em European Journal of Operational Research}, 248(1):211--218.

\bibitem[Boz\'oki et~al., 2010]{BozokiFulopRonyai2010}
Boz\'oki, S., F\"ul\"op, J., and R\'onyai, L. (2010).
\newblock On optimal completion of incomplete pairwise comparison matrices.
\newblock {\em Mathematical and Computer Modelling}, 52(1-2):318--333.

\bibitem[Boz{\'o}ki and Rapcs{\'a}k, 2008]{BozokiRapcsak2008}
Boz{\'o}ki, S. and Rapcs{\'a}k, T. (2008).
\newblock On {S}aaty's and {K}oczkodaj's inconsistencies of pairwise comparison
  matrices.
\newblock {\em Journal of Global Optimization}, 42(2):157--175.

\bibitem[Boz{\'o}ki and Tsyganok, 2019]{BozokiTsyganok2019}
Boz{\'o}ki, S. and Tsyganok, V. (2019).
\newblock The (logarithmic) least squares optimality of the arithmetic
  (geometric) mean of weight vectors calculated from all spanning trees for
  incomplete additive (multiplicative) pairwise comparison matrices.
\newblock {\em International Journal of General Systems}, 48(4):362--381.

\bibitem[Brunelli, 2018]{Brunelli2018}
Brunelli, M. (2018).
\newblock A survey of inconsistency indices for pairwise comparisons.
\newblock {\em International Journal of General Systems}, 47(8):751--771.

\bibitem[Cavallo, 2020]{Cavallo2020}
Cavallo, B. (2020).
\newblock Functional relations and {S}pearman correlation between consistency
  indices.
\newblock {\em Journal of the Operational Research Society}, 71(2):301--311.

\bibitem[Chao et~al., 2018]{ChaoKouLiPeng2018}
Chao, X., Kou, G., Li, T., and Peng, Y. (2018).
\newblock Jie {K}e versus {A}lpha{G}o: A ranking approach using decision making
  method for large-scale data with incomplete information.
\newblock {\em European Journal of Operational Research}, 265(1):239--247.

\bibitem[Chen et~al., 2018]{ChenKouLi2018}
Chen, K., Kou, G., and Li, C. (2018).
\newblock A linear programming model to reduce rank violations while eliciting
  preference from pairwise comparison matrix.
\newblock {\em Journal of the Operational Research Society}, 69(9):1512--1523.

\bibitem[Choo and Wedley, 2004]{ChooWedley2004}
Choo, E.~U. and Wedley, W.~C. (2004).
\newblock A common framework for deriving preference values from pairwise
  comparison matrices.
\newblock {\em Computers \& Operations Research}, 31(6):893--908.

\bibitem[Crawford and Williams, 1985]{CrawfordWilliams1985}
Crawford, G. and Williams, C. (1985).
\newblock A note on the analysis of subjective judgment matrices.
\newblock {\em Journal of Mathematical Psychology}, 29(4):387--405.

\bibitem[Csat\'o, 2013]{Csato2013a}
Csat\'o, L. (2013).
\newblock Ranking by pairwise comparisons for {S}wiss-system tournaments.
\newblock {\em Central European Journal of Operations Research},
  21(4):783--803.

\bibitem[Csat\'o, 2019]{Csato2019b}
Csat\'o, L. (2019).
\newblock Axiomatizations of inconsistency indices for triads.
\newblock {\em Annals of Operations Research}, 280(1-2):99--110.

\bibitem[Csat{\'o} and R{\'o}nyai, 2016]{CsatoRonyai2016}
Csat{\'o}, L. and R{\'o}nyai, L. (2016).
\newblock Incomplete pairwise comparison matrices and weighting methods.
\newblock {\em Fundamenta Informaticae}, 144(3-4):309--320.

\bibitem[De~Graan, 1980]{DeGraan1980}
De~Graan, J.~G. (1980).
\newblock Extensions of the multiple criteria analysis method of {T}. {L}.
  {S}aaty.
\newblock Report, National Institute for Water Supply, Voorburg.

\bibitem[de~Jong, 1984]{deJong1984}
de~Jong, P. (1984).
\newblock A statistical approach to {S}aaty's scaling method for priorities.
\newblock {\em Journal of Mathematical Psychology}, 28(4):467--478.

\bibitem[Duszak and Koczkodaj, 1994]{DuszakKoczkodaj1994}
Duszak, Z. and Koczkodaj, W.~W. (1994).
\newblock Generalization of a new definition of consistency for pairwise
  comparisons.
\newblock {\em Information Processing Letters}, 52(5):273--276.

\bibitem[Faramondi et~al., 2020]{FaramondiOlivaBozoki2020}
Faramondi, L., Oliva, G., and Boz{\'o}ki, S. (2020).
\newblock Incomplete analytic hierarchy process with minimum weighted ordinal
  violations.
\newblock {\em International Journal of General Systems}, 49(6):574--601.

\bibitem[Golany and Kress, 1993]{GolanyKress1993}
Golany, B. and Kress, M. (1993).
\newblock A multicriteria evaluation of methods for obtaining weights from
  ratio-scale matrices.
\newblock {\em European Journal of Operational Research}, 69(2):210--220.

\bibitem[Koczkodaj, 1993]{Koczkodaj1993}
Koczkodaj, W.~W. (1993).
\newblock A new definition of consistency of pairwise comparisons.
\newblock {\em Mathematical and Computer Modelling}, 18(7):79--84.

\bibitem[Koczkodaj et~al., 1999]{KoczkodajHermanOrlowski1999}
Koczkodaj, W.~W., Herman, M.~W., and Orlowski, M. (1999).
\newblock Managing null entries in pairwise comparisons.
\newblock {\em Knowledge and Information Systems}, 1(1):119--125.

\bibitem[Rabinowitz, 1976]{Rabinowitz1976}
Rabinowitz, G. (1976).
\newblock Some comments on measuring world influence.
\newblock {\em Conflict Management and Peace Science}, 2(1):49--55.

\bibitem[Rezaei, 2015]{Rezaei2015}
Rezaei, J. (2015).
\newblock Best-worst multi-criteria decision-making method.
\newblock {\em Omega}, 53:49--57.

\bibitem[Saaty, 1977]{Saaty1977}
Saaty, T.~L. (1977).
\newblock A scaling method for priorities in hierarchical structures.
\newblock {\em Journal of Mathematical Psychology}, 15(3):234--281.

\bibitem[Saaty, 1980]{Saaty1980}
Saaty, T.~L. (1980).
\newblock {\em The {A}nalytic {H}ierarchy {P}rocess: Planning, Priority
  Setting, Resource Allocation}.
\newblock McGraw-Hill, New York.

\bibitem[Shiraishi and Obata, 2002]{ShiraishiObata2002}
Shiraishi, S. and Obata, T. (2002).
\newblock On a maximization problem arising from a positive reciprocal matrix
  in {AHP}.
\newblock {\em Bulletin of Informatics and Cybernetics}, 34(2):91--96.

\bibitem[Shiraishi et~al., 1998]{ShiraishiObataDaigo1998}
Shiraishi, S., Obata, T., and Daigo, M. (1998).
\newblock Properties of a positive reciprocal matrix and their application to
  {AHP}.
\newblock {\em Journal of the Operations Research Society of Japan},
  41(3):404--414.

\bibitem[Tekile et~al., 2023]{TekileBrunelliFedrizzi2023}
Tekile, H.~A., Brunelli, M., and Fedrizzi, M. (2023).
\newblock A numerical comparative study of completion methods for pairwise
  comparison matrices.
\newblock {\em Operations Research Perspectives}, 10:100272.

\bibitem[Temesi et~al., 2023]{TemesiSzadoczkiBozoki2023}
Temesi, J., Sz{\'a}doczki, {\relax Zs}., and Boz{\'o}ki, S. (2023).
\newblock Incomplete pairwise comparison matrices: Ranking top women tennis
  players.
\newblock {\em Journal of the Operational Research Society}, in press.
\newblock {DOI}:
  \href{https://doi.org/10.1080/01605682.2023.2180447}{10.1080/01605682.2023.2180447}.

\bibitem[Tu and Wu, 2021]{TuWu2021}
Tu, J. and Wu, Z. (2021).
\newblock H-rank consensus models for fuzzy preference relations considering
  eliminating rank violations.
\newblock {\em IEEE Transactions on Fuzzy Systems}, 30(6):2004--2018.

\bibitem[Tu and Wu, 2023]{TuWu2023}
Tu, J. and Wu, Z. (2023).
\newblock Analytic hierarchy process rank reversals: causes and solutions.
\newblock {\em Annals of Operations Research}, in press.
\newblock {DOI}:
  \href{https://doi.org/10.1007/s10479-023-05278-6}{10.1007/s10479-023-05278-6}.

\bibitem[Tu et~al., 2023]{TuWuPedrycz2023}
Tu, J., Wu, Z., and Pedrycz, W. (2023).
\newblock Priority ranking for the best-worst method.
\newblock {\em Information Sciences}, 635:42--55.

\bibitem[Wang et~al., 2021]{WangPengKou2021}
Wang, H., Peng, Y., and Kou, G. (2021).
\newblock A two-stage ranking method to minimize ordinal violation for pairwise
  comparisons.
\newblock {\em Applied Soft Computing}, 106:107287.

\bibitem[Williams and Crawford, 1980]{WilliamsCrawford1980}
Williams, C. and Crawford, G. (1980).
\newblock Analysis of subjective judgment matrices.
\newblock Interim report R-2572-AF, Rand Corporation, Santa Monica.

\bibitem[Yuan et~al., 2023]{YuanWuTu2023}
Yuan, R., Wu, Z., and Tu, J. (2023).
\newblock Large-scale group decision-making with incomplete fuzzy preference
  relations: The perspective of ordinal consistency.
\newblock {\em Fuzzy Sets and Systems}, 454:100--124.

\end{thebibliography}

\clearpage
\section*{Appendix}

The lexicographically optimal completion (see Definition~\ref{Def12}) has been introduced and discussed by \citet{AgostonCsato2023}. Here a short overview and an illustrative example are provided on its construction based on \citet[Section~3]{AgostonCsato2023}. Further details can be found in \citet{AgostonCsato2023}.

By regarding the logarithmically transformed entries of the original pairwise comparison matrix, the lexicographically optimal completion can be obtained via solving successive linear programming (LP) problems.

Let $\mathcal{L}$ denote the index set of all triads. The elements of a triad $\ell \in \mathcal{L}$ are denoted by $i_\ell$, $j_\ell$, and $k_\ell$.
Consider the LP problem in the following form:
\begin{align*}
z & \rightarrow \min \tag{LP1.obj} \label{eq: obj_fun_LP} \\
\log a_{i_\ell,j_\ell} +  \log a_{j_\ell,k_\ell} + \log a_{k_\ell,i_\ell} &\le z_\ell  \qquad &\forall \ell\in\mathcal{L} \tag{LP1.1} \label{eq: g_elteres1_LP} \\
\log a_{i_\ell,j_\ell} +  \log a_{j_\ell,k_\ell} + \log a_{k_\ell,i_\ell} &\ge z_\ell  \qquad &\forall \ell\in\mathcal{L} \tag{LP1.2}\label{eq: g_elteres2_LP}
\\
z_\ell &\le z  \qquad &\forall \ell\in\mathcal{L} \tag{LP1.3}\label{eq: prem_LP} \\
\qquad \qquad  z_\ell &\ge 0  \qquad &\forall \ell\in\mathcal{L} \\
z &\ge 0\ \ ,\
\end{align*}
where $\log a_{i_\ell,j_\ell}$, $\log a_{j_\ell,k_\ell}$, $\log a_{k_\ell,i_\ell}$ is a parameter (unbounded decision variable) if the corresponding matrix element is known (missing). The suggested algorithm for the lexicographically optimal completion is provided in Algorithm~\ref{alg_LOC}.

\begin{algorithm}
\caption{Lexicographically optimal completion}
\label{alg_LOC}
\begin{algorithmic}[1]
\State \(\mathcal{M} \leftarrow \mathcal{L}\)
\State solve the LP problem LP1
\State \(obj \leftarrow\) objective value of LP1
\While{\(obj > 0\) }
\State find a constraint \(z_\ell\le z\) (\(\ell\in\mathcal{M}\)) for which the dual variable is negative
\State change constraint \(z_\ell\le z\) to \(z_\ell\le obj\)
\State \(\mathcal{M} \leftarrow \mathcal{M}\setminus\{\ell\}\)
\State solve the modified LP
\State \(obj \leftarrow\) objective value of the modified LP
\EndWhile
\end{algorithmic}
\end{algorithm}

Thus, the lexicographically optimal completion can be obtained by an iterative process:
\begin{enumerate}
\item \label{LP_solution1}
A linear programming problem is solved to minimise the natural triad inconsistency index for all triads with an unknown value of $TI$.
\item
A triad (represented by two constraints in the LP), where the inconsistency index $TI$ cannot be lower, is chosen, which can be seen from the non-zero shadow price of at least one constraint.
\item
The inconsistency index $TI$ is fixed for this triad (or one of these triads if there exists more than one), the associated constraints are removed from the LP, and we return to Step~\ref{LP_solution1}.
\end{enumerate}
The algorithm finishes if the minimal $TI$ is determined for all triads. The number of LPs to be solved is at most the number of triads having an incomplete pairwise comparison (which is finite) because the number of constraints in the LP decreases continuously.

\begin{example} \label{Examp2}
Take the following incomplete pairwise comparison matrix of order four, where $a_{13}$ (thus $a_{31}$) and $a_{14}$ (thus $a_{41}$) remain undefined:
\[
\mathbf{A} = \left[
\begin{array}{K{3em} K{3em} K{3em} K{3em}}
    1     	& a_{12}  	& \ast   	& \ast   \\
    a_{21}	& 1       	& a_{23}	& a_{24} \\
   \ast		& a_{32}	& 1      	& a_{34} \\
   \ast 	& a_{42}  	& a_{43}	& 1 \\
\end{array}
\right].
\]
The four triads imply eight constraints in the first LP due to the reciprocity condition:
\begin{align} \label{LP1_Examp2}
z_1 \to \min \nonumber \\
\log a_{12} + \log a_{23} - \log x_{13} & \leq z_1 \nonumber \\
- \log a_{12} - \log a_{23} + \log x_{13} & \leq z_1 \nonumber \\
\log a_{12} + \log a_{24} - \log x_{14} & \leq z_1 \nonumber \\
- \log a_{12} - \log a_{24} + \log x_{14} & \leq z_1 \nonumber \\
\log x_{13} + \log a_{34} - \log x_{14} & \leq z_1 \nonumber \\
- \log x_{13} - \log a_{34} + \log x_{14} & \leq z_1 \nonumber \\
\log a_{23} - \log a_{24} + \log a_{34} & \leq z_1 \nonumber \\
- \log a_{23} + \log a_{24} - \log a_{34} & \leq z_1
\end{align}

If $a_{12} = 2$, $a_{24} = 8$, $a_{23} = a_{34} = 1$, and the missing elements are substituted by variables, we get the matrix below:
\[
\mathbf{B(\mathbf{x})} = \left[
\begin{array}{K{3em} K{3em} K{3em} K{3em}}
    1     	& 2			& x_{13}   	& x_{14} \\
    1/2		& 1       	& 1			& 8 \\
   1/x_{13}	& 1 		& 1      	& 1 \\
   1/x_{14}	& 1/8	 	& 1  		& 1 \\
\end{array}
\right].
\]
This matrix contains four triads with the following values of $TI$:
\[
TI_{123}(\mathbf{x}) = \max \left\{ \frac{x_{13}}{2}; \frac{2}{x_{13}} \right\};
\]
\[
TI_{124}(\mathbf{x}) = \max \left\{ \frac{x_{14}}{16}; \frac{16}{x_{14}} \right\};
\]
\[
TI_{134}(\mathbf{x}) = \max \left\{ \frac{x_{14}}{x_{13}}; \frac{x_{13}}{x_{14}} \right\};
\]
\[
TI_{234}(\mathbf{x}) = \max \left\{ 8; \frac{1}{8} \right\}.
\]

According to equation (3.6) in \citet{BozokiRapcsak2008}, the Koczkodaj inconsistency index of matrix $\mathbf{B(\mathbf{x})}$ is
\[
KI \left( \mathbf{B(\mathbf{x})} \right) = 1 - \frac{1}{\max \left\{ TI_{123}(\mathbf{x}); TI_{124}(\mathbf{x}); TI_{134}(\mathbf{x}); TI_{234}(\mathbf{x}) \right\}}.
\]
Hence, $KI \left( \mathbf{B(\mathbf{x})} \right)$ is minimal if $t(\mathbf{x}) = \max \left\{ TI_{123}(\mathbf{x}); TI_{124}(\mathbf{x}); TI_{134}(\mathbf{x}); TI_{234}(\mathbf{x}) \right\}$ is minimal. Since $TI_{234}(\mathbf{x}) = 8$, $t(\mathbf{x}) \geq 8$.

In addition, $t(\mathbf{x}) = 8$ if the following conditions are satisfied:
\[
TI_{123}(\mathbf{x}) \leq 8 \iff 1/4 \leq x_{13} \leq 16;
\]
\[
TI_{124}(\mathbf{x}) \leq 8 \iff 2 \leq x_{14} \leq 128;
\]
\[
TI_{134}(\mathbf{x}) \leq 8 \iff 1/8 \leq x_{13} / x_{14} \leq 8.
\]
Therefore, $z_1 = \bar{z_1} = 3$ in \eqref{LP1_Examp2} due to the seventh constraint but there are multiple optimal solutions.

In the second iteration, a simpler LP should be solved after removing the constraints associated with the triad $(2,3,4)$:
\begin{align} \label{LP2_Examp2}
z_2 \to \min \nonumber \\
\log a_{12} + \log a_{23} - \log x_{13} & \leq z_2 \nonumber \\
- \log a_{12} - \log a_{23} + \log x_{13} & \leq z_2 \nonumber \\
\log a_{12} + \log a_{24} - \log x_{14} & \leq z_2 \nonumber \\
- \log a_{12} - \log a_{24} + \log x_{14} & \leq z_2 \nonumber \\
\log x_{13} - \log a_{34} + \log x_{14} & \leq z_2 \nonumber \\
- \log x_{13} + \log a_{34} - \log x_{14} & \leq z_2
\end{align}
\eqref{LP1_Examp2} already has a unique solution: $\log_2 x_{13} = 2$ ($x_{13} = 4$), $\log_2 x_{14} = 3$ ($x_{14} = 8$), $z_2 = 1$.
\end{example}

\end{document}